\documentclass[11pt,oneside,reqno]{amsart}
\usepackage{amsfonts}
\usepackage{amssymb,amsmath, amsthm}
\usepackage[numbers]{natbib}
\usepackage{graphicx}
\usepackage[pdftex,plainpages=false,colorlinks,hyperindex,bookmarksopen,linkcolor=red,citecolor=blue,urlcolor=blue]{hyperref}
\usepackage{mathrsfs}
\usepackage{xcolor}
\usepackage{epstopdf}
\usepackage{color}
\usepackage{enumerate}
\usepackage{float}
\usepackage{enumitem}
\usepackage{mathtools}
\usepackage{lipsum}
\usepackage{comment}
\calclayout
\makeatletter
\def\author@andify{  \nxandlist {\unskip ,\penalty-1 \space\ignorespaces}    {\unskip {} \@@and~}    {\unskip \penalty-2 \space \@@and~}}
\makeatother
\pdfoutput=1
\bibpunct{[}{]}{;}{n}{,}{,}
\newtheorem{theorem}{Theorem}[section]

\theoremstyle{remark}
\newtheorem{remark}{Remark}[section]

\newtheorem{lem}{Lemma}[section]

\newcommand{\be}{\begin{eqnarray}}
\newcommand{\ee}{\end{eqnarray}}

\numberwithin{equation}{section}
\allowdisplaybreaks

\begin{document}

\title{Shot-noise processes with logarithmic response function and their scaling limits}
\date{}

\address{${}^1$ Department of Statistical Sciences, Sapienza University of
Rome}
\email{luisa.beghin@uniroma1.it}
\email{enrico.scalas@uniroma1.it}
\address{${}^2$ Department of Mathematics, Luxembourg University}
\email{lorenzo.cristofaro@uni.lu}

\author{}

\author{}

\author{Luisa Beghin$^{1}$} \author{Lorenzo Cristofaro$^{2}$} \author{Enrico Scalas$^{1}$} 

\begin{abstract}
We consider shot-noise processes with an impulse response written in terms of the logarithm of the ratio between current and event time (instead of the usual absolute time difference). We study its finite-time properties as well as its weak convergence, under appropriate scaling and with general assumptions on the dependence of noises on event times. The limiting process coincides with the so-called Hadamard fractional Brownian motion (introduced in \cite{BEG}), which represents a middle ground between standard Brownian motion and fractional Brownian motion. It shares with the former the one-dimensional distribution (i.e. Gaussian with the same first two moments), while possessing the long-memory property (within a certain parameter range) of the latter, though with smaller intensity. Therefore, we identify a natural probabilistic scheme based on shot-noise processes whose scaling limit is the Hadamard fractional Brownian motion, thereby providing a concrete stochastic finite-time counterpart of this process.

\noindent\textbf{Keywords:} shot-noise, Gaussian processes, long-range dependence, functional limit theorems. 

\noindent\textbf{Mathematics Subject Classification (2020):} 60F17, 60G22, 33C15.
 \bigskip
\end{abstract}
\maketitle
\section{Introduction}
Shot-noise processes were introduced by Schottky to explain noise in vacuum tubes due to the random arrival of discrete charge carriers \cite{schottky1918}. The concept was then used in other fields also outside physics. Nowadays, they represent a versatile tool that can be used to model a variety of phenomena in physics, biology, finance, and so on; they are particularly useful for modeling situations in which there are sudden, discrete changes in a system. These processes are often applied, in finance, to model the fluctuations in asset prices due to the arrival of trades. In this context, the \textit{shots} are the individual trades, and the \textit{noise} is the overall fluctuation in the asset price. For a general reference on this kind of application, see \cite{SCH}, where, in particular, time-inhomogeneous shot-noise processes are treated. Further applications, for example, to actuarial sciences are explored in \cite{DAS} on pricing catastrophe reinsurance and derivatives using Cox processes with shot noise intensity. In \cite{KLU2} explosive Poisson shot noise processes are studied with applications to risk reserves' management.  

To give a better idea on shot-noise, it is useful to resort to heuristic considerations. Let $\left\{J_i\right\}_{i>0}$ be a sequence of i.i.d. exponential random variables of parameter $\lambda>0$ with the meaning of waiting times between events and define the random walk $T_n$ with $T_0 =0$
\begin{equation}
    T_n = \sum_{i=1}^n J_i
\end{equation}
and its inverse, the renewal Poisson counting process,
\begin{equation}
    N(t) = \max \{n: T_n \leq t \}.
\end{equation}
Let $\left\{Y_i\right\}_{i>0}$ be a sequence of i.i.d. random variables and define the shot noise as
\begin{equation}
    Y(t) = \sum_{1\leq i \leq N(t)} Y_i \delta(t-T_i), \qquad t \geq 0.
    \label{shotnoise}
\end{equation}

A general non-stationary linear system (operator), $T$, can be represented by a Volterra kernel \cite{volterra1, volterra2, volterra3}. Given the informal nature of this introduction we do not discuss the domain of $T$ in detail, but we assume that it can be applied to (tempered) distributions. Let $y(t)$ represent the input signal and $s(t)$ the output signal, then we have
\begin{equation}
    s(t) = T[y(t)] = s_0 + \int_0^t g_T (t,\tau) y(\tau) \, d \tau,
    \label{linearsystem}
\end{equation}
where $g_T (t,\tau)$ is the impulse response of the system. Now, if we set $y(t) = Y(t)$, we get the following process $S(t)$ as output of the linear system 
\begin{equation}
    S(t) = \sum_{1\leq i \leq N(t)} Y_i g_T (t,T_i).
    \label{responsetoshot}
\end{equation}
Calling $S(t)$ {\em shot noise} is perhaps an abuse of language as the name should be reserved to the process $Y(t)$ of equation \eqref{shotnoise} before entering the linear system, but it is now common in the mathematics literature. An advantage of dealing with objects of the form \eqref{responsetoshot} is that one can avoid the theory of (tempered) distributions and focus on ``standard'' processes instead. Moreover, strictly speaking, an equation such as \eqref{shotnoise} is purely ``theoretical'' as what one observes is filtered through a measurement apparatus whose action can be often approximated by a linear system. As an example, if in equation \eqref{linearsystem}, we set $g_T (t,\tau) = 1$, the output is just the integral of the input $y(t)$:
$$
s(t) = s_0 + \int_0^t y(\tau) \, d\tau.
$$
In the special case of shot noise input \eqref{shotnoise}, we get
\begin{equation}
    S(t) = \sum_{i=1}^{N(t)} Y_i,
    \label{CTRW}
\end{equation}
namely a compound Poisson process. It is not necessary to assume that the linear system defined in equation \eqref{linearsystem} is conservative or dissipative. In other words, it is possible to have an increasing impulse response as a consequence of external energy introduced into the system, such as in the case of an amplifier.

Recently, some papers aimed to establish either
self-similar
stationary Gaussian or stable
limits of shot-noise processes under different assumptions (see \cite{KLU3}).

It is well-known that Poisson shot noise, under a regular variation
condition on the shot shape function, converges weakly to fractional Brownian motion with Hurst parameter greater than $1/2$ (see \cite{KLU}) and thus asymptotically displays a long-range dependence. 

This result has been recently extended in \cite{PAN} considering integrated shot-noise processes where noises are conditionally independent given their arrival times, but have distribution depending on the latter. In this case, the limiting processes obtained are self-similar, Gaussian, with non-stationary increments and generalize the fractional Brownian motion (hereafter fBm). The sample path moderate deviations for the same model have been studied in \cite{WAN}.  The weak convergence of (scaled) renewal shot noise processes, with response functions which are eventually non-decreasing and regularly varying at infinity, has been considered in \cite{IKS}. The renewal shot noise is characterized by interarrival times that are not necessarily exponentially distributed, but may follow any distribution on the positive half-line. The asymptotic behavior of renewal shot noise process with random response functions and immigration has been studied in \cite{IKS2} and \cite{IKS3}. 

Here, we consider Poisson shot noise under a proper definition of the shot space function (the impulse response), i.e. assuming that it depends on the ratio, instead of on the difference of times, and with a slowly-varying behavior. We analyze its finite-time properties as well as the asymptotic behavior. The most relevant feature is the following: At least in the simplest case of independent noises,
the quadratic variation of the process is finite and equal to zero, contrary to what happens (under similar assumptions) for shot noise with regularly-varying shot space function. 

Moreover, we prove that the scaling limit of this process  coincides with a generalization of fBm, called Hadamard-fBm and denoted as $B_\alpha^H:=\left\{B_\alpha^H(t) \right\}_{t \geq 0}$. The latter process was introduced and studied in \cite{BEG}: it is a centered, Gaussian process with the same one-dimensional distribution (i.e. with the same first two moments) as Brownian motion, but with  non-stationary increments displaying either short-range or long-range dependence, for $\alpha \in (0,1)$ and $\alpha \in (1,2)$, respectively. Many other properties of the process (i.e. H\"{o}lder continuity, quasi-helix behavior, power variation, local nondeterminism) are studied in \cite{BDM} where its Wiener integration theory is developed. 

This kind of diffusion processes (in the case $\alpha \in (1,2)$) are potential candidates to model financial time-series with standard mean square displacement, but with long-range dependence and non-stationary increments. We show that the shot noise defined here, in the special case of independence between noises and epochs, possesses scale-invariant autocorrelation function (with time lag $\tau$), even in the finite time-horizon; this means that it is well-suited for modeling ageing systems with such a feature. 

In addition to establishing weak convergence, the main contribution of this paper is to
provide a natural probabilistic scheme whose scaling limit coincides with the Hadamard-fBm. More precisely, we show that a class of shot-noise processes with logarithmic
response function, under suitable normalization, converges to it. In this sense, our model
can be interpreted as a constructive approximation of the Hadamard-fBm,
analogous to the classical approximation of fractional Brownian motion via Poisson shot-noise
processes with regularly varying kernels.

This perspective complements the existing literature, where H-fBm is introduced via white-noise constructions, by providing a concrete stochastic particle-based model
whose large-scale behavior yields the same limit.

\section{Definitions and preliminary results}
In view of what follows, we recall \textit{Tricomi's confluent hypergeometric function} (see \cite{NIST}, formula (13.2.42)) defined as
\begin{equation}\Psi\left(a, b ; z\right):=\frac{\Gamma(1-b)}{\Gamma(1+a-b)}\Phi(a,b;z)+\frac{\Gamma(b-1)}{\Gamma(a)}\Phi(1+a-b,2-b;z), \label{tri}
\end{equation}
for $a,b,z \in \mathbb{C}$, $\Re(b) \neq 0, \pm 1, \pm 2,...$, where $\Phi(a,b;z):=\sum_{l=0}^{+\infty}\frac{(a)_l}{(b)_l}\frac{z^l}{l!}$ and $(c)_l :=\frac{\Gamma(c+l)}{\Gamma(c)}$. 

The following integral representation holds for the confluent hypergeometric function: 
\begin{equation} \label{eqTricomiINtegral} \Psi\left(a, b ; z\right)=\frac{1}{\Gamma(a)}\int^{\infty}_{0}e^{-sz}s^{a-1}(1+s)^{b-a-1}ds, \end{equation} if $\Re(a)>0,$ $\Re(z) \geq 0$ (see \cite{KIL}, p.\ 30), 
while the asymptotic behavior, for $z \rightarrow 0$, is given by 
\begin{equation}
\Psi(a,b;z)=\frac{\Gamma(1-b)}{\Gamma(a-b+1)}+O(z), \qquad \Re(b) <0 \label{asy0}
\end{equation}
(see \cite{NIST}, formula (13.2.22)).

Moreover, one can check that the function $\Psi(a,b;\cdot)$ is non-decreasing, for $a<0$, since 
\begin{equation}
\frac{d}{dx}\Psi(a,b;x)=-a\Psi(a+1,b+1;x) \label{derpsi}
\end{equation}
(see \cite{NIST}, formula (13.3.22) for $n=1$).

Let us recall the definition, given in \cite{BEG}, of the so-called ``Hadamard fractional Brownian motion" (hereafter H-fBm), in the white-noise space $(\mathcal{S}'(\mathbb{R}), \mathcal{B}, \nu)$ (with $\mathcal{S}'(\mathbb{R})$ being the dual of Schwartz space $\mathcal{S}(\mathbb{R})$, $\mathcal{B}$ its cylinder $\sigma$-algebra and $\nu(\cdot)$ the white noise measure), as $B^H_\alpha:=\left\{ B^H_\alpha (t)\right\}_{t \geq 0}$, where $B^H_\alpha (t,\omega):=\langle \omega, \thinspace ^H\mathcal{M}^{\alpha/2} _{-}1_{[0,t)} \rangle$, $t\geq 0,\;\omega \in \mathcal{S%
}^{\prime }(\mathbb{R})$, and
\begin{equation*}
\left( \thinspace ^H\mathcal{M}_{-}^{\alpha/2 }f\right) (x):=\left\{
\begin{array}{l}
K_\alpha \left( \thinspace ^H\mathcal{D}_{-}^{(1-\alpha)/2 }f\right) (x),\qquad \alpha \in (0,1)
\\
f(x),\qquad \alpha =1 \\
K_\alpha \left( \thinspace ^H\mathcal{I}_{-}^{(\alpha-1)/2 }f\right) (x),\qquad \alpha \in (1,2).%
\end{array}%
\right.
\end{equation*}
where $K_{\alpha}=\Gamma ((\alpha+1)/2)/\sqrt{\Gamma(\alpha)}$ and $^H\mathcal{D}_{-}^{(1-\alpha)/2 }$, $^H\mathcal{I}_{-}^{(\alpha-1)/2 }$ are the Hadamard fractional derivative and fractional integral, respectively (see \cite{BEG} for details and further references).

Let now $C_\alpha =2^{1-\alpha}\sqrt{\pi}/\Gamma(\alpha/2)$, then, it was proved in \cite{BEG} that, for any $\alpha \in (0,1)  \cup (1,2)$, the H-fBm is a centered Gaussian process, with characteristic function given, for $0<t_1<...<t_n$, $n \in \mathbb{N}$, and $k_j \in \mathbb{R},$ $j=1,...,n$, by 
\begin{equation}
    \mathbb{E}e^{i\sum_{j=1}^{n}k_j B^H_{\alpha }(t_j)}=\exp \left\{-\frac{1}{2}\sum_{j,l=1}^{n}k_j k_l \sigma^\alpha_{j,l}   \right\}, \label{cf4}
\end{equation}
where 
\begin{equation} 
\sigma^\alpha_{j,l}:=\left\{
\begin{array}{l}
t_j , \qquad j=l \\
C_\alpha (t_j \wedge t_l)\Psi\left(\frac{1-\alpha}{2}, 1-\alpha ; \log\left(\frac{t_j \vee t_l}{ t_j \wedge t_l}\right) \right) , \qquad j \neq l. \label{sigma}
\end{array}
\right.
\end{equation}
Moreover, $B^H_\alpha$ has non-stationary increments, with variance 
\begin{eqnarray}\label{rho} 
\rho (s,t):=\mathbb{E}\left[ B_\alpha ^H (t)-B_\alpha ^H (s)\right]^2 
&=&\frac{1}{\Gamma(\alpha)}\int_{0}^{t} \left[\left( \log \frac{t}{x}\right)_+
^{(\alpha-1)/2 }-\left( \log \frac{s}{x}\right)_+
^{(\alpha-1)/2 }\right]^2 dx \\
&=&t+s -2C_\alpha   s\Psi\left(\frac{1-\alpha}{2}, 1-\alpha ; \log\left(\frac{t }{ s}\right) \right) \notag
\end{eqnarray}
for $0\leq s<t$.

It is proved in \cite{BEG} that $B_\alpha^H$ is self-similar with index $1/2$, it is stochastically continuous, for any $\alpha \in (0,1) \cup (1,2)$, and has $a.s.$ continuous sample paths for $\alpha \in (1,2)$; H\"{o}lder properties of trajectories can be found in \cite{BDM}.

Finally, 
the process is proved to be long-range dependent, for $\alpha \in (1,2)$ (in \cite{BEG}), while it has short memory, for $\alpha \in (0,1)$ (see \cite{BDM}).

The case $\alpha \in (1,2)$ is therefore particularly interesting both for the a.s. continuity and the long-range dependence properties. The key tool for the proof of the latter is the variance of the increments, which is also crucial for our analysis. 
We then present the following lemma, which summarizes the properties of $\rho(\cdot,\cdot)$:

\begin{lem}\label{lem1}
For $\alpha \in (1,2)$, the function $\rho(\cdot,\cdot)$ given in \eqref{rho} has the following properties:
\begin{enumerate}[label=(\emph{\roman*})]
    \item $\rho(0,0)=0$ and $\rho(s,t) \geq 0$, for any $0\leq s \leq t$
    \item it is ``super-additive": $\rho(r,s) + \rho(s,t) \leq \rho(r,t)$, for any $0  \leq r \leq s \leq t$
    \item  $\rho(s,t) \leq \rho(s,t')$ and  $\rho(s,t') \geq \rho(t,t')$, for any $0 \leq s  \leq t \leq t'$
    \item $\lim_{h \to 0} \rho(t,t+h)=0$ 
    \item $\rho(0,t)$ is continuous at $t.$
\end{enumerate}
\end{lem}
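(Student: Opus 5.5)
The plan is to work with the integral representation in \eqref{rho} rather than the Tricomi form, because for $\alpha\in(1,2)$ it has a monotone $L^2$ structure. Set $\beta:=(\alpha-1)/2\in(0,1/2)$ and, for $t\ge 0$, let $f_t(x):=\Gamma(\alpha)^{-1/2}\left(\log\frac{t}{x}\right)_+^{\beta}$ for $x\in(0,\infty)$. Then \eqref{rho} reads $\rho(s,t)=\|f_t-f_s\|^2_{L^2(0,\infty)}$. Since $\beta>0$, the map $t\mapsto f_t(x)$ is pointwise non-decreasing, so $f_t-f_s\ge 0$ whenever $s\le t$. This monotonicity is the key feature of the range $\alpha\in(1,2)$; for $\alpha<1$ the exponent is negative and the argument below fails.

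For (i), note that $f_0\equiv 0$, so $\rho(0,0)=0$, and $\rho\ge0$ because it is a squared norm. For (ii), take $r\le s\le t$ and write $f_t-f_r=(f_t-f_s)+(f_s-f_r)$. Expanding the square gives
\begin{equation*}
\rho(r,t)=\rho(r,s)+\rho(s,t)+2\int_0^\infty (f_t-f_s)(f_s-f_r)\,dx .
\end{equation*}
Both factors in the cross term are non-negative, so the cross term is non-negative, which yields super-additivity. Property (iii) then follows immediately from (ii) and (i). For $s\le t\le t'$ we have $\rho(s,t')\ge\rho(s,t)+\rho(t,t')$, and each summand on the right is non-negative, so $\rho(s,t')$ dominates both $\rho(s,t)$ and $\rho(t,t')$.

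For (iv) and (v), I switch to the closed form $\rho(s,t)=t+s-2C_\alpha s\,\Psi\!\left(\frac{1-\alpha}{2},1-\alpha;\log\frac ts\right)$. Here $b=1-\alpha<0$, so \eqref{asy0} applies and gives
\begin{equation*}
\Psi\!\left(\tfrac{1-\alpha}{2},1-\alpha;z\right)\to\frac{\Gamma(\alpha)}{\Gamma\left(\frac{1+\alpha}{2}\right)} \quad\text{as } z\to0^+ ,
\end{equation*}
since $a-b+1=\frac{1+\alpha}{2}$. The Legendre duplication formula $\Gamma(\alpha)=2^{\alpha-1}\pi^{-1/2}\Gamma(\alpha/2)\Gamma\left(\frac{\alpha+1}{2}\right)$ shows that $C_\alpha\Gamma(\alpha)/\Gamma\left(\frac{1+\alpha}{2}\right)=1$. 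For $h\downarrow0$ we take $s=t$ and $\log\frac{t+h}{t}\to0$, so $\rho(t,t+h)\to t+t-2t=0$. For $h\uparrow0$ we instead use $\rho(t+h,t)$ with $s=t+h\to t$. Again $\log\frac{t}{t+h}\to0$, and the prefactor $s$ converges to $t$, so the same limit holds.

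Alternatively, and more robustly, (iv) follows from dominated convergence in the integral form. For $|h|\le t/2$ the functions $f_{t+h}$ are bounded by $f_{3t/2}\in L^2$ and vanish outside $(0,3t/2)$, and $f_{t+h}\to f_t$ pointwise for $x\ne t$. I would present this version to avoid fussing over continuity of $\Psi$ at $z=0$, since \eqref{eqTricomiINtegral} is unavailable because $a<0$. Finally, for (v) the closed form gives $\rho(0,t)=t$, because the Tricomi term carries the factor $s=0$; this is directly the variance of $B^H_\alpha(t)$. Hence $\rho(0,\cdot)$ is continuous.

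The only genuinely delicate point is (iv), specifically checking the constant identity, or justifying the limit of $\Psi$ at $0$ uniformly with the moving prefactor. The dominated-convergence route sidesteps both issues.
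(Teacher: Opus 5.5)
Your proof is correct and follows the same overall structure as the paper's. You use the integral form in \eqref{rho} for (i) and (v), and the closed form together with \eqref{asy0} and the duplication formula for (iv). Your explicit check that $C_\alpha\Gamma(\alpha)/\Gamma(\frac{\alpha+1}{2})=1$ is exactly the computation the paper leaves implicit.

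The one real difference is (ii). The paper does not prove super-additivity but cites \cite{BEG}, Theorem 3.2. You instead give a self-contained argument: you write $\rho(s,t)=\|f_t-f_s\|^2_{L^2}$ and note that the cross term $\int (f_t-f_s)(f_s-f_r)\,dx$ is non-negative because $t\mapsto(\log\frac tx)_+^{(\alpha-1)/2}$ is non-decreasing when $\alpha>1$. This shows transparently where the restriction $\alpha\in(1,2)$ enters, namely the non-negative correlation of increments of $B^H_\alpha$. It also lets you get both halves of (iii) from (i) and (ii). The paper instead gets $\rho(s,t')\ge\rho(t,t')$ ``from the first line of \eqref{rho}'', which amounts to the pointwise comparison $0\le f_{t'}-f_t\le f_{t'}-f_s$, the same monotonicity in another form. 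Your dominated-convergence proof of (iv) is also valid and avoids special-function asymptotics altogether.

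One small point on (v): you cannot literally read $\rho(0,t)=t$ off the closed form. At $s=0$ the argument $\log(t/s)$ of $\Psi$ is infinite, and you would need $\Psi(a,b;z)\sim z^{-a}$ to see that $s\Psi\to 0$. It is cleaner to use the integral form, $\rho(0,t)=\|f_t\|^2=\Gamma(\alpha)^{-1}\int_0^t\log^{\alpha-1}(t/x)\,dx=t$, which you already have available since $f_0\equiv 0$.
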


\begin{proof} Let us consider the above points, one by one.
\begin{enumerate}[label=(\emph{\roman*})]
\item follows from the first line of \eqref{rho}.
 \item was already proved in \cite{BEG}; see Theorem 3.2.
 \item from (\emph{i}) and (\emph{ii}) we immediately have that $\rho(s,t) \leq \rho(s,t')$; on the other hand, from the first line of \eqref{rho},  we get $\rho(s,t') \geq \rho(t,t')$.
 \item follows from the second line of \eqref{rho}, using \eqref{asy0}, together with the duplication formula of the gamma function.
 \item is an immediate consequence of \eqref{rho}.
 \end{enumerate}  
 
\end{proof}

\section{Main results}
We consider here the shot-noise process defined as $S:=\left\{S(t)\right\}_{t \geq 0}$, where
\begin{equation}
S(t):=\sum_{j=1}^{+\infty} g(t/T_j) R_j(T_j), \qquad t \geq 0, \label{defs}
\end{equation}
and $g: \mathbb{R}^+ \to \mathbb{R}^+$ represents the response function, which, in this case, depends on the ratio instead of the distance between the current time and the renewal epoch. We assume that $g(t)=0,$ for $t \leq 1$, in order to guarantee convergence of the sum in \eqref{defs}; moreover, 
since $g(\cdot)$ has non-negative values, the effect of the noises is supposed to linger over time. The r.v.'s $T_j$, $j \in \mathbb{N}$, are the Poisson arrival times (epochs) of shots, with rate $\lambda$; for any $j \in \mathbb{N}$, the r.v. $R_j(T_j)$ represents the noise (a random variable) caused by the $j$-th shot at time $T_j$ and is assumed to depend on $T_j$ (as emphasized by the notation). On the other hand, the $R_j$'s are conditionally independent of each other and identically distributed with 
\[
F_u(x):=P(R_j \leq x |T_j=u), \qquad x \in \mathbb{R}, u \in \mathbb{R}^+.
\]
The conditional moments of $R_j$, i.e. $K_r(u):=\int_{\mathbb{R}}x^r dF_u(x)$, for any $u,r \in \mathbb{R}^+ $ and $j \in \mathbb{N}$, are assumed to be finite at least for $r=1,2,3,4$. Moreover, we assume that $K_1(u)=0,$ for any $u,$ without loss of generality.

Following \cite{BUT} and using the relation $\sum_{j=1}^{N(t)}g(t/T_j)=\int_0^{+\infty}g(t/u)N(du)$ (where $\left\{N(t)\right\}_{t \geq 0}$ is a Poisson process with intensity $\lambda$), we can write, for any $t \geq 0$,
\begin{equation}
S(t) = \int_0^{+\infty} \int_{\mathbb{R}} g(t/u) r N(du,dr), \label{sn1}
\end{equation}
where $N(\cdot,\cdot)$ is a Poisson measure with intensity $\lambda F_u(dr)du$. In view of what follows, we denote as $\overline{N}_a(du)$ the Poisson random measure with intensity $\lambda adu$.

Let the functions $h,h_1,h_2: \mathbb{R} \to \mathbb{R}$ be such that the integrals
\begin{eqnarray}
    &&\mathbb{E} \left[ \int \int h(u)r^p N(du,dr)\right]=\lambda \int h(u)K_p(u)du \notag \\
    &&\mathbb{E} \left[\int \int \int h_1(u) h_2(u)rr' N(du,dr)N(du,dr')\right]=\lambda \int h_1(u) h_2(u)K_2(u)du \notag
\end{eqnarray}
converge. Using the second formula with $h_1(u):=g(t/u)$ and $h_2(u):=g(s/u)$,
we can evaluate the auto-covariance function of $S$ as
\begin{equation}
\mathrm{Cov}(S(t),S(s))=
\lambda \int_0^{+\infty}g(t/u)g(s/u)K_2(u)du \label{cov} 
\end{equation}
for $s,t \geq 0$, 
(see \cite{PAN}, for details).
Similarly, we can obtain, from the first formula (with $p=1$ and $h(u):=\sum_{j=1}^n k_jg(t_j/u)$), the following characteristic function  
\begin{eqnarray}
    &&\mathbb{E}e^{i\sum_{j=1}^{n}k_j S(t_j)}=\mathbb{E}\exp \left\{i \int_0^{+\infty} \int_{\mathbb{R}} \sum_{j=1}^{n}k_j g(t_j/u) r N(du,dr) \right\} \label{car} \\
    &=& \exp \left\{ \lambda\int_0^{+\infty} \int_{\mathbb{R}} \left[e^{i\sum_{j=1}^{n}k_j g(t_j/u) r}-1
    \right]F_u(dr)du \notag
    \right\},
\end{eqnarray}
 for 
$0<t_1<...<t_n<t$, $n \in \mathbb{N}$, and $k_j \in \mathbb{R},$ $j=1,...,n$.

We assume that 
\begin{equation}
g(t)=\log^{\beta}(t)1_{[1,+\infty)}(t), \label{hypg2}
\end{equation}
for $\beta \in (0,1/2)$ and $t \geq 0$, so that definition \eqref{defs} reduces to
\begin{eqnarray}\label{defs2}
S_\beta(t)&=&\sum_{j=1}^{+\infty} \log^{\beta}(t/T_j) R_j(T_j)1_{t \geq T_j}= \sum_{j=1}^{+\infty} (\log t - \log T_j)_+^\beta R_j(T_j)\\
&=&\sum_{j=1}^{N(t)} (\log t - \log T_j)^\beta R_j(T_j).\notag
\end{eqnarray}

Let $L(\cdot)$ be a positive slowly varying function at $+ \infty$ if $\lim_{x \to +\infty}L(ax)/L(x)=1$, for any $a \in \mathbb{R}^+$; we recall that  $\log ^\beta(x)$ is slowly varying, for any $\beta \in \mathbb{R}$, so that we are assuming here that the lingering effect of the noises, which is represented by $g(\cdot) $, is slowly varying over time. 
Moreover, we shall assume that the conditional variance of the noises, given the Poisson epochs, is asymptotically constant, when the latter tend to infinity.

\subsection{Finite-time results in some relevant cases}
We give some examples of the shot-noise process $S_\beta:=\left\{S_\beta(t)\right\}_{t\geq 0}$ under different assumptions on the conditional distribution (or variance) of the variables $R_j$'s. As we shall see, in these special cases, we can compute an explicit expression of the auto-covariance function of $S_\beta$.
\subsubsection*{Example 3.1 (Independent noises)}\label{ex1}
If we assume that the $R_j$'s are independent of the occurring times $T_j$'s, i.e. their distribution function is $F_u(\cdot)=F(\cdot)$, for any $u \geq 0$, and their variance is a constant: $K_2(\cdot)=K_2>0$, then the auto-covariance of $S_\beta$ follows from \eqref{cov} and \eqref{eqTricomiINtegral}, for $0\leq s<t$:
\begin{eqnarray}
\mathrm{Cov}(S_\beta(t),S_\beta(s))&=&K_2
\lambda \int_0^{+\infty}\left( \log \frac{s}{x}\right)_+
^\beta \left( \log \frac{t}{x}\right)_+
^\beta dx 
=K_2 \lambda\int_{0}^{s} \left( \log \frac{s}{x}\right)
^\beta \left( \log \frac{t}{x}\right)
^\beta dx \notag \\
&=&sK_2 \lambda\int_{0}^{\infty} \left[ \log \frac{t}{s}+w\right]
^\beta w^\beta e^{-w} dw  \label{ind} \\
&=&sK_2 \lambda \left[ \log \frac{t}{s}\right]^{2\beta +1 }\int_{0}^{\infty} (1+y)^{\beta } y^{\beta} e^{-y \log(t/s)} dy \notag \\
&=&\Gamma (\beta+1 )sK_2\lambda\left[ \log \frac{t}{s}\right]^{2\beta +1}\Psi\left(\beta+1, 2\beta +2 ; \log\frac{t}{ s} \right)  \notag \\
&=&\Gamma (\beta+1 )sK_2\lambda\Psi\left(-\beta, -2\beta ; \log\frac{t}{ s} \right), \notag
\end{eqnarray}
where, in the last step, we have used the following formula
\begin{equation}
\Psi(a,b;z)=z^{1-b}\Psi(a+1-b,2-b;z), \notag
\end{equation}
(see \cite{NIST}, eq.\ (13.2.40)). By recalling \eqref{asy0}, we get that $\mathrm{Var}(S_\beta(t))=\lambda K_2 \Gamma(2\beta+1)t$. As a consequence of \eqref{derpsi}, we can say that the auto-covariance is non-decreasing in $t$ and depends on the ratio of $s$ and $t$, instead of their difference. 

Thus the process is non-stationary, but it shares the covariance homogeneity (of index $1$) under multiplicative time scaling with Brownian motion, i.e. $\mathrm{Cov}(S_\beta(ct),S_\beta(cs))=c\thinspace \mathrm{Cov}(S_\beta(t),S_\beta(s))$, for any $s,t,c >0.$

By considering the asymptotic behavior of the Tricomi confluent hypergeometric function \eqref{tri}, for large arguments, i.e. $\Psi(a,b;z) \sim z^{-a}$, for $z\to \infty$ and $a \in \mathbb{R}$ (see \cite{NIST}, formula (13.7.10)), we can deduce that
\[
\mathrm{Cov}(S_\beta(t+\tau),S_\beta(t))\sim (\log (1+\tau/t))^\beta, \qquad \tau \to \infty.
\]
Thus the covariance decays extremely slowly, i.e. only logarithmically, which suggests that an ultra-long memory, much slower than any power law, should appear in the limit.

We can also see that the auto-correlation function displays a scale-invariance property: indeed we can write that
\[
\mathrm{Corr}(S_\beta(t),S_\beta(t+\tau))=C(1+\tau/t)^{-1/2}\Psi(-\beta,-2\beta; \log(1+\tau/t)),
\]
for a constant $C>0,$ and thus there exists a function $\phi(\cdot)$ such that
$\mathrm{Corr}(S_\beta(t),S_\beta(t+\tau))=C\phi(\tau/t)$. Usually these autocorrelation functions are used to describe long-time behavior in an aging system, where both the age $t$ and the time lag $\tau$ are large compared to the system’s intrinsic time scales. Thus it appears, asymptotically, in a large number of anomalous diffusion problems (see, among the others, \cite{DEC} where the case of non-stationary scale-invariant systems is considered). The process $S_\beta$, on the other hand, possesses scale-invariant autocorrelation for any finite $t$ and $\tau$ (not only asymptotically), when the noises are independent of the occurring times.

\subsubsection*{Example 3.2 (Noises with power-law decaying variance)}
If we assume that the conditional variance has a power law decay, i.e. $K_2(t)=K+1/t^\gamma$, for $K>0$ and $\gamma \in [0,1)$, we obtain, by calculations similar to the previous case, that, for $0\leq s<t$,
\begin{eqnarray}
\mathrm{Cov}(S_\beta(t),S_\beta(s))&=&
\lambda \int_0^{+\infty}\left( \log \frac{s}{x}\right)_+
^\beta \left( \log \frac{t}{x}\right)_+
^\beta \left(K+\frac{1}{x^\gamma} \right) dx 
 \notag \\
 &=&\Gamma (\beta+1 )s\lambda \left[K \Psi\left(-\beta, -2\beta ; \log\frac{t}{ s} \right)+\frac{(1-\gamma)^{-2\beta-1}}{s^\gamma}\Psi\left(-\beta, -2\beta ;(1-\gamma) \log\frac{t}{ s}\right)\right]. \notag 
\end{eqnarray}

\subsubsection*{Example 3.3 (Noises with logarithmically decaying variance)} If we assume that the conditional variance has the following logarithmic decay $K_2(t)=K+\log(t^{-\gamma})$, for $t>1$, $K>0$ and $\gamma>0$, we have instead that
\begin{eqnarray}
&&\mathrm{Cov}(S_\beta(t),S_\beta(s))=
\lambda \int_0^{+\infty}\left( \log \frac{s}{x}\right)_+
^\beta \left( \log \frac{t}{x}\right)_+
^\beta \left(K+\log(x^{-\gamma})\right) dx 
 \notag \\
 &=&  \lambda s K\int_{0}^{\infty} \left[ \log \frac{t}{s}+w\right]
^\beta w^\beta e^{-w} dw +\lambda s \gamma\int_{0}^{\infty} \left[ \log \frac{t}{s}+w\right]
^\beta w^\beta (w-\log s) e^{-w} dw\notag \\
&=&\Gamma (\beta+1 )\lambda s\left[K-\gamma\log s\right] \Psi\left(-\beta, -2\beta ; \log\frac{t}{ s} \right)+\Gamma (\beta+2 )\lambda s \gamma \Psi\left(-\beta, -2\beta-1 ;\log\frac{t}{ s}\right) \notag \\
&=& \Gamma (\beta+1 )\lambda sK_2(s) \Psi\left(-\beta, -2\beta ; \log\frac{t}{ s} \right)+\Gamma (\beta+2 )\lambda s \gamma \Psi\left(-\beta, -2\beta-1 ;\log\frac{t}{ s}\right) \notag
\end{eqnarray}

\begin{remark}
    Under the assumption of independent noises, as in Example 3.1, we can compute the quadratic variation of the process $S_{\beta}$ and compare it to the case of shot-noise with power response function, i.e. $X_\beta:=\left\{X_\beta(t) \right\}_{t \geq 0}$, where $X_\beta(t):=\sum_{j=1}^{+\infty} (t -  T_j)_+^\beta R_j$, under the same assumptions on $\beta$ and on the variance. Using equation \eqref{ind} together with the covariance homogeneity (of index $1$) and the results in \cite{BDM}, Theorem 5.1 with $p=2$, we have that, for $T>0$ and $\beta \in (0,1/2)$,
   \begin{eqnarray}
   &&\lim_{n \to \infty}\sum_{k=1}^n \mathbb{E} \left[ S_\beta\left( \frac{Tk}{n}\right)-S_\beta\left( \frac{T(k-1)}{n}\right)\right]^2 \notag \\
   &=& T\lim_{n \to \infty}\frac{1}{n}\sum_{k=1}^n \mathbb{E} \left[ S_\beta\left( k\right)-S_\beta\left(k-1\right)\right]^2 \notag \\
   &=&T\lambda K_2 \lim_{n \to \infty}\frac{1}{n}\sum_{k=1}^n \int_0^{k-1}\left[  \log^\beta (k/u)-\log^\beta ((k-1)/u) \right]^2 du+T\lambda K_2  \lim_{n \to \infty}\frac{1}{n}\sum_{k=1}^n \int_{k-1}^{k}  \log^{2\beta} (k/u) du \notag \\
   &\leq&2T\lambda K_2 \lim_{n \to \infty}\frac{1}{n}\sum_{k=1}^n  (k-1)^{-2\beta} =0, \notag
   \end{eqnarray}
since  $\sum_{j=1}^{n-1}j^a \sim n^{a+1},$ as $n \to \infty.$

On the other hand. for the standard case, we have that, for $s<t$ and for any $c>0$,
\begin{eqnarray}
\mathrm{Cov}(X_\beta(ct),X_\beta(cs))&=&
\lambda K_2 \int_0^{+\infty}( cs-x)_+
^\beta ( ct-x)_+
^\beta  dx= c^{2\beta+1}\mathrm{Cov}(X_\beta(t),X_\beta(s)).
 \notag
 \end{eqnarray}
 Therefore the quadratic variation reads
\begin{eqnarray}
   &&\lim_{n \to \infty}\sum_{k=1}^n \mathbb{E} \left[ X_\beta\left( \frac{Tk}{n}\right)-X_\beta\left( \frac{T(k-1)}{n}\right)\right]^2 \notag = T^{2\beta+1}\lim_{n \to \infty}\frac{1}{n^{2\beta+1}}\sum_{k=1}^n \mathbb{E} \left[ X_\beta\left( k\right)-X_\beta\left(k-1\right)\right]^2 \notag \\
   &=&T^{2\beta+1} \lim_{n \to \infty}\frac{1}{n^{2\beta+1}}\sum_{k=1}^n \int_0^{k-1}\left[ (k-u)^\beta-(k-1-u)^\beta \right]^2 du+T^{2\beta+1} \lim_{n \to \infty}\frac{1}{n^{2\beta+1}}\sum_{k=1}^n \int_{k-1}^{k}  (k-u)^{2\beta} du \notag \\
   &=&T^{2\beta+1} \lim_{n \to \infty}\frac{1}{n^{2\beta+1}}\sum_{k=1}^n \int_0^{k-1}\left[ (1+v)^\beta-v^\beta \right]^2 dv+\frac{T^{2\beta+1}}{2\beta+1} \lim_{n \to \infty}n^{-2\beta} \notag \\
   &\sim&2T^{\beta+1}\lim_{n \to \infty}\frac{1}{n^{2\beta+1}}\sum_{k=1}^n  (k-1), \notag
   \end{eqnarray}
   which diverges for $n \to \infty,$ giving infinite quadratic variation in the standard case, contrary to the zero quadratic variation of $S_\beta.$

   Below, we present two plots where the trajectories of the process $S_\beta$ are compared to those of $X_\beta$, for different values of $\beta$ and under the same assumptions on Poisson intensity and jump distribution (i.e. for $\lambda=K_2=1$ and for standard Gaussian $R_j$'s). They confirm the results obtained on the increment variance and on quadratic variation: already for $\beta=0.1$, the polynomial kernel shows a greater cumulative variability compared to the logarithmic one. For $\beta=0.3$ this is much more evident: the trajectories of $S_\beta$ are considerably regular, while the weight of past jumps causes more sudden changes in the trajectories of $X_\beta$. Note that, in all these simulations, the $T_j$s and the $R_j$s are not changed for ease of comparison. The code used for the simulations can be found at the following GitHub link: {\tt https://github.com/FractionalEarthquakes/HfBm}.
\begin{figure}[H]
\caption{Logarithmic vs polynomial shot-noises, for $\beta=0.1$}
{{\includegraphics[width=10.5cm]{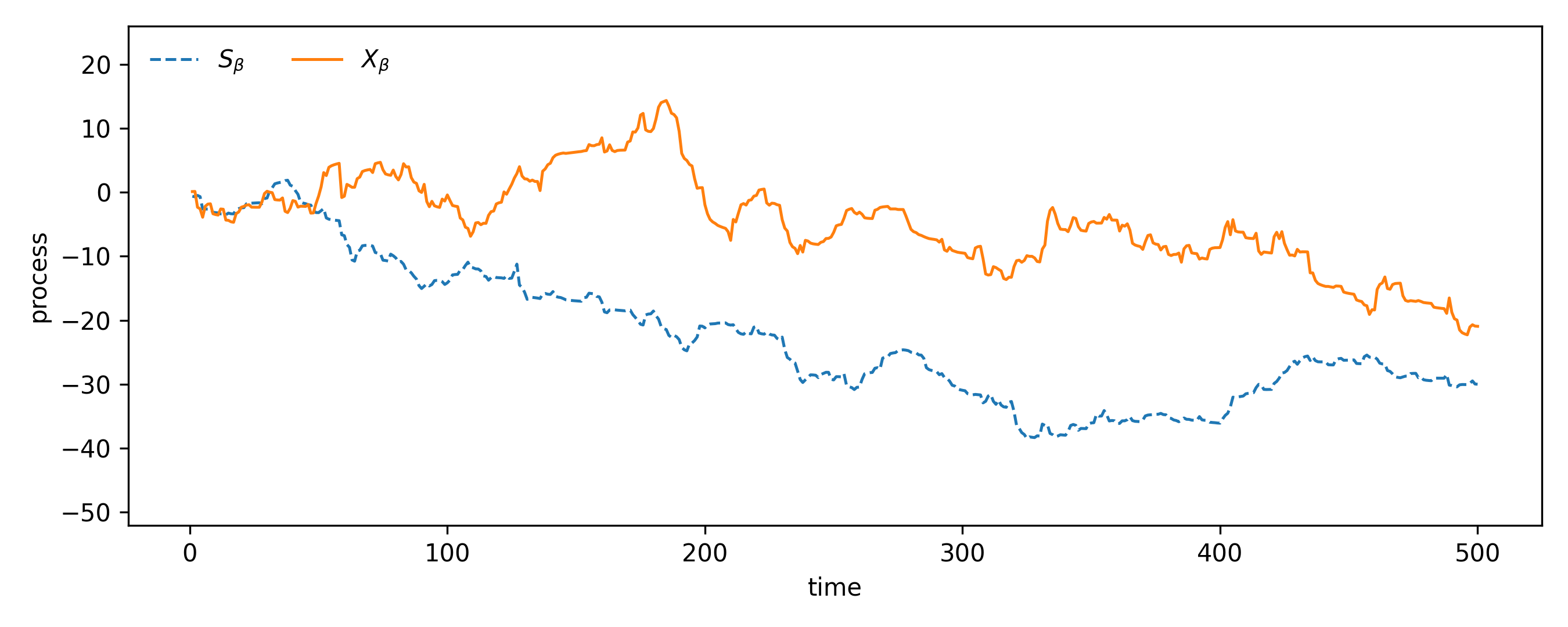}}}
 \end{figure}

 \begin{figure}[H]   \caption{Logarithmic vs polynomial shot-noises, for $\beta=0.3$}
{{\includegraphics[width=10.5cm]
    {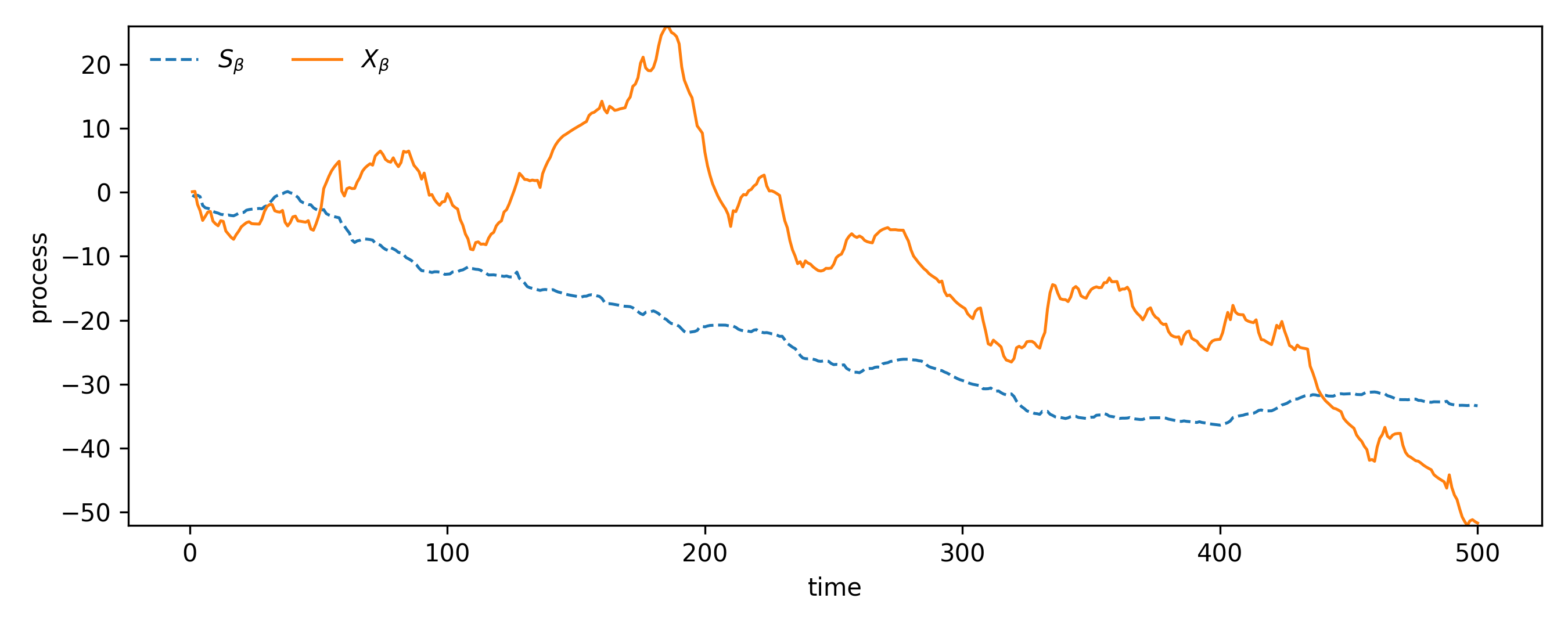}}}
    \label{fig:example}%
    \end{figure}

\end{remark}

\subsection{Asymptotic results}
We now study the limiting behavior of the shot-noise process defined in \eqref{defs2}, under a proper scaling. In order to obtain the result given in \cite{BEG} in the limit, we will hereafter set $\beta=(\alpha-1)/2$, for $\alpha \in (1,2)$. Note that, to simplify the notation, we adopt the following convention hereafter: $\log_+(\cdot):=(\log(\cdot))_+$.

\begin{theorem}\label{THM1}
Let $K_{\alpha,\lambda}:=K \lambda \Gamma(\alpha)$. If $K_2(\cdot)$ and $K_4(\cdot)$ are positive, uniformly bounded
functions such that $\lim_{t \to \infty}K_2(t)=K>0$ and $K_4(u)/K_2(u) \leq \kappa$, for any $u \in \mathbb{R} $ and $\kappa >0$, then the scaled shot-noise process defined as $\hat{S}_{\alpha,c}:=\left\{ \hat{S}_{\alpha, c}(t)\right\}_{t \geq 0}$, where
\begin{equation}
    \hat{S}_{\alpha,c}(t):=\frac{S(ct)}{\sqrt{cK_{\alpha,\lambda}}}=\frac{1}{\sqrt{cK_{\alpha,\lambda}}}\sum_{j=1}^{+\infty} \left(\log(ct)-\log(T_j)\right)^\frac{\alpha-1}{2}_+R_j(T_j), \qquad t \geq 0, \quad \alpha \in (1,2), \label{sn}
\end{equation}
converges, in the sense of finite-dimensional distributions, to the H-fBm $B^H_\alpha$, as $c \to +\infty$.
\end{theorem}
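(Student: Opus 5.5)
We prove convergence of finite-dimensional distributions by showing that the characteristic function of $(\hat S_{\alpha,c}(t_1),\dots,\hat S_{\alpha,c}(t_n))$ converges to \eqref{cf4}. Fix $0<t_1<\dots<t_n$ and $k_1,\dots,k_n\in\mathbb{R}$. Put $\beta=(\alpha-1)/2$ and $h_c(u):=\sum_{j}k_j\log_+^{\beta}(ct_j/u)/\sqrt{cK_{\alpha,\lambda}}$. By \eqref{car},
\[
\log \mathbb{E}e^{i\sum_j k_j\hat S_{\alpha,c}(t_j)}=\lambda\int_0^{+\infty}\int_{\mathbb{R}}\left[e^{i h_c(u) r}-1-ih_c(u)r\right]F_u(dr)\,du,
\]
where the compensating term $ih_c(u)r$ has been added at no cost because $K_1\equiv 0$. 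Expanding $e^{ix}-1-ix=-x^2/2+R(x)$ with $|R(x)|\le |x|^3/6$ splits this into a quadratic part, $-\frac{\lambda}{2}\int h_c(u)^2K_2(u)\,du$, and a remainder.

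\textbf{Quadratic part.} Substituting $u=cx$ gives
\[
\frac{\lambda}{2}\int_0^{+\infty} h_c(u)^2K_2(u)\,du=\frac{1}{2K\Gamma(\alpha)}\sum_{j,l}k_jk_l\int_0^{t_j\wedge t_l}\log^\beta\frac{t_j}{x}\log^\beta\frac{t_l}{x}\,K_2(cx)\,dx .
\]
For each $x>0$ we have $K_2(cx)\to K$. Since $K_2$ is uniformly bounded and $\log^{\beta}(t/x)$ is square integrable on $(0,t)$, dominated convergence yields the limit $\frac{1}{2\Gamma(\alpha)}\sum k_jk_l\int_0^{t_j\wedge t_l}\log^\beta(t_j/x)\log^\beta(t_l/x)\,dx$. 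The computation in \eqref{ind} (Example 3.1) identifies this integral as $\Gamma(\beta+1)\,(t_j\wedge t_l)\,\Psi(-\beta,-2\beta;\log\frac{t_j\vee t_l}{t_j\wedge t_l})$ for $j\ne l$, and as $\Gamma(2\beta+1)t_j=\Gamma(\alpha)t_j$ on the diagonal. Dividing by $\Gamma(\alpha)$, the diagonal entries are $t_j$, matching \eqref{sigma}. For the off-diagonal entries, $-\beta=(1-\alpha)/2$ and $-2\beta=1-\alpha$ reproduce the Tricomi arguments, so it remains to check that $\Gamma((\alpha+1)/2)/\Gamma(\alpha)=C_\alpha=2^{1-\alpha}\sqrt{\pi}/\Gamma(\alpha/2)$. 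This follows from the duplication formula $\Gamma(\alpha)=2^{\alpha-1}\Gamma(\alpha/2)\Gamma((\alpha+1)/2)/\sqrt{\pi}$. As a cross-check, it is consistent with the first line of \eqref{rho}, which expresses the H-fBm covariance as $\frac{1}{\Gamma(\alpha)}\int \log_+^\beta\log_+^\beta\,dx$.

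\textbf{Remainder.} This is the main obstacle. We need
\[
\lambda\int\!\!\int |h_c(u)r|^3F_u(dr)\,du\to 0 .
\]
Only $K_2,K_4$ are controlled, so we use Cauchy--Schwarz:
\[
\int|r|^3F_u(dr)\le \sqrt{K_2(u)K_4(u)}\le \sqrt{\kappa}\,K_2(u)\le C .
\]
Then $\lambda\int|h_c(u)|^3\,du\le C'c^{-3/2}\int_0^{ct_n}\bigl(\sum_j|k_j|\log^\beta(ct_j/u)\bigr)^3\,du$. After the substitution $u=cx$ this equals $C'c^{-1/2}\int_0^{t_n}\bigl(\sum_j|k_j|\log_+^\beta(t_j/x)\bigr)^3\,dx=O(c^{-1/2})$, since $\log^{3\beta}(t/x)$ is integrable on $(0,t)$ (substitute $x=te^{-w}$ to get a Gamma integral). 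Hence the remainder tends to $0$.

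Combining the two parts, the characteristic functions converge to $\exp\{-\frac12\sum_{j,l}k_jk_l\sigma^\alpha_{j,l}\}$, which is \eqref{cf4}. Lévy's continuity theorem then gives convergence of the finite-dimensional distributions to those of $B^H_\alpha$. The only delicate points are the dominated-convergence justification near $x=0$, where the logarithmic singularity is integrable, and the control of the third conditional moment through the hypothesis $K_4/K_2\le\kappa$.
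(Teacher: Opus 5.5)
Your proposal is correct and follows the paper's route. Like the paper, you compensate the characteristic exponent using $K_1\equiv 0$, rescale $u=cx$, reduce to the quadratic form with $K_2(cx)$, pass to the limit by dominated convergence, and identify the covariance via the Tricomi representation and the duplication formula. The one real difference works in your favour: the paper drops the higher-order terms only by appealing to $e^{ix}-1-ix\sim -x^2/2$, whereas your bound $|R(x)|\le |x|^3/6$ together with $\int|r|^3F_u(dr)\le\sqrt{K_2(u)K_4(u)}$ gives an explicit $O(c^{-1/2})$ remainder. For that step, uniform boundedness of $K_2$ and $K_4$ already suffices; the ratio condition $K_4/K_2\le\kappa$ is not needed.
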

\begin{proof}
    By considering \eqref{sn}-\eqref{car}  and recalling that, by assumption, $K_1(u)=0$, for any $u$, we can write the $n$-point characteristic function of $\hat{S}_{\alpha,c}$, for
    $0<t_1<...<t_n<t$, $n \in \mathbb{N}$, and $k_j \in \mathbb{R},$ $j=1,...,n$, as follows
\begin{eqnarray}
    &&\mathbb{E}e^{i\sum_{j=1}^{n}k_j \hat{S}_{\alpha,c}(t_j)}=\mathbb{E}\exp \left\{\frac{i}{\sqrt{cK_{\alpha,\lambda}}}\int_0^{+\infty} \int_{\mathbb{R}} \sum_{j=1}^{n}k_j \log_+^{(\alpha -1)/2}(ct_j/u) r N(du,dr) \right\} \label{car2} \\
     &=& \exp \left\{ \lambda\int_0^{+\infty} \int_{\mathbb{R}} \left[e^{  \frac{i}{\sqrt{cK_{\alpha,\lambda}}} \sum_{j=1}^{n}k_j \log_+^{(\alpha -1)/2}(ct_j/u) r}-1-\frac{i}{\sqrt{cK_{\alpha,\lambda}}} \sum_{j=1}^{n}k_j \log_+^{(\alpha -1)/2}(ct_j/u) r \right]F_u(dr)du 
    \right\} \notag \\
    &=& \exp \left\{ \lambda c\int_0^{+\infty} \int_{\mathbb{R}} \left[e^{ \frac{i}{\sqrt{cK_{\alpha,\lambda}}} \sum_{j=1}^{n}k_j \log_+^{(\alpha -1)/2}(t_j/u) r}-1- \frac{i}{\sqrt{cK_{\alpha,\lambda}}} \sum_{j=1}^{n}k_j \log_+^{(\alpha -1)/2}(t_j/u) r \right]F_{cu}(dr)du 
    \right\}. \notag
\end{eqnarray}

Therefore, since it is known that $e^{ix}-1 -ix \sim -x^2/2$, as $x \to 0$, we obtain that, for $c \to \infty$,
\begin{eqnarray} \notag
\log \mathbb{E}e^{i\sum_{j=1}^{n}k_j \hat{S}_{\alpha,c}(t_j)} &\sim& -\frac{1 }{2K\Gamma(\alpha)} \int_0^{+\infty} \int_{\mathbb{R}} \left[\sum_{j=1}^{n}k_j \log_+^{(\alpha -1)/2}(t_j/u)\right]^2 r^2 F_{cu}(dr)du \\
&=& -\frac{1}{2K\Gamma(\alpha)} \int_0^{+\infty}  \left[\sum_{j=1}^{n}k_j \log_+^{(\alpha -1)/2}(t_j/u)\right]^2 K_2(cu)du. \label{arr1}
\end{eqnarray}
 By recalling \eqref{cf4} and \eqref{sigma}, in order to prove that 
 \[\lim_{c \to \infty}\log \mathbb{E}e^{i\sum_{j=1}^{n}k_j \hat{S}_{\alpha,c}(t_j)}=\log \mathbb{E}e^{i\sum_{j=1}^{n}k_j B^H_{\alpha }(t_j)}=-\frac{1}{2}\sum_{j,l=1}^{n}k_j k_l \sigma^\alpha_{j,l}   ,\]
 for any $t_j,k_j$, $j=1,...,n$ and $n \in \mathbb{N}$, it is sufficient to rewrite the integral in  \eqref{arr1} as follows
 \begin{eqnarray} \label{arr2}
&&\int_0^{+\infty}  \left[\sum_{j=1}^{n}k_j \log_+^{(\alpha -1)/2}(t_j/u)\right]^2 K_2(cu)du \\
&=& \sum_{j=1}^{n}k^2_j  \int_0^{t_j} \log^{\alpha -1}(t_j/u) K_2(cu)du+\sum_{\substack{j,l \in \left\{1,...,n \right\} \\
j \neq l}}k_j k_l \int_0^{t_j \wedge t_l} \log^{(\alpha -1)/2}(t_j/u) \log^{(\alpha -1)/2}(t_l/u) K_2(cu)du. \notag
\end{eqnarray}
By the assumptions on $K_2(\cdot)$ together with the dominated convergence theorem, we have that
 \begin{eqnarray} \label{arr3}
&& \lim_{c \to \infty}\log \mathbb{E}e^{i\sum_{j=1}^{n}k_j \hat{S}_{\alpha,c}(t_j)}\\
&=&- \frac{1}{2\Gamma(\alpha)} \sum_{j=1}^{n}k^2_j  \int_0^{t_j} \log^{\alpha -1}(t_j/u) du- \frac{1}{2\Gamma(\alpha)} \sum_{\substack{j,l \in \left\{1,...,n \right\} \\
j \neq l}}k_j k_l \int_0^{t_j \wedge t_l} \log^{(\alpha -1)/2}(t_j/u) \log^{(\alpha -1)/2}(t_l/u)du \notag \\
&=&- \frac{1}{2} \sum_{j=1}^{n}k^2_j t_j - \frac{C_\alpha}{2}  \sum_{\substack{j,l \in \left\{1,...,n \right\} \\
j \neq l}}k_j k_l (t_j \wedge t_l)\Psi\left(\frac{1-\alpha}{2}, 1-\alpha ; \log\left(\frac{t_j \vee t_l}{ t_j \wedge t_l}\right) \right)  \notag \\
&=& -\frac{1}{2}\sum_{j,l=1}^{n}k_j k_l \sigma^\alpha_{j,l} \notag
\end{eqnarray}
(details on the calculations are available in \cite{BEG}, Theorem 3.1 and formula (3.12)).
\end{proof}

We now prove the weak convergence of $\hat{S}_{\alpha,c}:=\left\{ \hat{S}_{\alpha, c}(t)\right\}_{t \geq 0}$ in the space $(\mathbb{D}[0,T],J_1)$, i.e. the space of all the $\mathbb{R}$-valued c\`{a}dl\`{a}g functions defined on the interval $[0,T]$, equipped with the Skorokhod $J_1$ topology with the metric $d_{J_1}$. To this aim we use the following result proved in \cite{PAN2}, Theorem 5.3, which provides sufficient conditions for such convergence.

\begin{lem}[\cite{PAN2}]
    Let $\left\{ X_n\right\}_{n \geq 1}$ and $X$ be stochastic processes with sample paths in $\mathbb{D}$ and let $\mu(\cdot)$ be a non-negative, set function from the Borel subset of $\mathbb{R}^+$ into $\mathbb{R}^+ \cup \left\{+ \infty \right\}$, such that $\mu(\varnothing)=0$. Assume that $\mu(\cdot)$ is monotone (i.e. $\mu(A) \leq \mu(B)$, for $A \subseteq B \subset \mathbb{R}^+$) and super-additive (i.e. $\mu(A)+\mu(B) \leq \mu( A \cup B)$, for any disjoint Borel sets $A$ and $B$). Then, under the following assumptions 

    \begin{enumerate}[label=(\emph{\roman*})]
        \item the finite dimensional distributions of the sequence $\left\{ X_n\right\}_{n \geq 1}$ converge to those of $X$;
        \item for any $\epsilon>0$, $\lim_{\delta \to 0} \mathbb{P}(|X(T)-X(T-\delta)| \geq \epsilon)=0$;
        \item there exists a constant $C>0$ such that, for any $0 \leq r \leq s \leq t \leq T$, with $t-r<2 \delta$, for some $\delta>0$ and for any $n \geq 1$, the following inequality holds
        \begin{equation}\label{eq}
        \mathbb{P}(|X_n(r)-X_n(s)| \wedge |X_n(s)-X_n(t)| \geq \epsilon) \leq \frac{C}{\epsilon^{4\beta}}\mu^{2 \eta}(r,t],
        \end{equation}
        for $\eta >1/2,$ $\beta \geq 0$;
        \item $\mu(0,t]$ is continuous in $t$;
    \end{enumerate}
    we have that $X_n \Rightarrow X$ in $(\mathbb{D}[0,T],J_1)$, for any $T>0$, as $n \to \infty.$
\end{lem}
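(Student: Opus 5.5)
The plan is to reduce the statement to Billingsley's classical tightness criterion in $\mathbb{D}[0,T]$ (Theorem 13.5 of \emph{Convergence of Probability Measures}, 2nd ed.). In that criterion the moment-type bound on the "two-sided" oscillation is stated with $F(t)-F(r)$ for a non-decreasing continuous function $F$. Here the role of $F$ will be played by the set function $\mu$.

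\textbf{Step 1: build $F$ from $\mu$.} Set $F(t):=\mu(0,t]$ for $t\in[0,T]$.
\begin{itemize}
\item Monotonicity of $\mu$ makes $F$ non-decreasing.
\item Assumption (\emph{iv}) makes $F$ continuous, hence uniformly continuous on $[0,T]$.
\item Super-additivity applied to the disjoint sets $(0,r]$ and $(r,t]$ gives $\mu(0,r]+\mu(r,t]\le \mu(0,t]$, that is,
\[
\mu(r,t]\le F(t)-F(r).
\]
\end{itemize}
Since $\{|a|\wedge|b|\ge\epsilon\}=\{|a|\ge\epsilon,\,|b|\ge\epsilon\}$, assumption (\emph{iii}) becomes
\[
\mathbb{P}\big(|X_n(s)-X_n(r)|\ge\epsilon,\ |X_n(t)-X_n(s)|\ge\epsilon\big)\le \frac{C}{\epsilon^{4\beta}}\big(F(t)-F(r)\big)^{2\eta},\qquad \eta>1/2,
\]
for all $r\le s\le t$ with $t-r<2\delta$.

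\textbf{Step 2: maximal inequality.} Next I would control the modulus
\[
w''(x,\delta)=\sup\big\{|x(s)-x(r)|\wedge|x(t)-x(s)|:\ r\le s\le t,\ t-r\le\delta\big\}.
\]
Billingsley's maximal inequality for the minimum of two adjacent increments (Theorem 10.3/10.4 there) is proved by a dyadic chaining argument. That argument only uses the above bound on triples inside a single short interval, and it needs $2\eta>1$ so the geometric series in the chaining converges. Apply it on each block $[k\delta,(k+2)\delta]\cap[0,T]$, using the restriction $t-r<2\delta$ only within such blocks. Summing over blocks should give
\[
\mathbb{P}\big(w''(X_n,\delta)\ge\epsilon\big)\le \frac{K}{\epsilon^{4\beta}}\sum_k \big(F((k+2)\delta)-F(k\delta)\big)^{2\eta}\le \frac{2K}{\epsilon^{4\beta}}\,F(T)\,\sup_{|t-t'|\le 2\delta}|F(t)-F(t')|^{2\eta-1},
\]
uniformly in $n$. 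The right-hand side tends to $0$ as $\delta\to0$ by uniform continuity of $F$.

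\textbf{Step 3: endpoint control.} Boundary oscillations at $0$ and $T$ are handled as in Billingsley's Theorem 13.3.
\begin{itemize}
\item Near $T$: fidi convergence (\emph{i}) together with (\emph{ii}) controls $|X_n(T)-X_n(T-\delta)|$ in the limit.
\item Near $0$: continuity of $F$ at $0$, with $F(0)=\mu(\varnothing)=0$, and the same maximal inequality control the oscillation there.
\end{itemize}
Fidi convergence also gives tightness of $\sup_t|X_n(t)|$, via the standard argument combining the $w''$ bound with tightness of finitely many marginals. Together these give tightness in $(\mathbb{D}[0,T],J_1)$. Since fidi limits on a dense set containing $T$ identify the law, we conclude $X_n\Rightarrow X$.

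\textbf{Main obstacle.} I expect Step 2 to be the main difficulty. The chaining must be done with $F$ in place of the Lebesgue measure, which requires Billingsley's time-change trick: reparametrize by $F$, which may be only non-decreasing rather than strictly increasing. It also has to respect the localization $t-r<2\delta$. I would first try to quote Billingsley's Theorem 10.3 directly on each block, with $u_i=F(t_i)-F(t_{i-1})$ as the weights.
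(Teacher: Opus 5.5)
The paper gives no proof of this lemma; it is quoted from \cite{PAN2}, Theorem 5.3, so there is nothing in the paper to compare your route against. Your argument is correct and follows the classical route. The lemma is Billingsley's Theorem 13.5 with $F(t)-F(r)$ replaced by $\mu(r,t]$. Your observation that super-additivity, together with $\mu(\varnothing)=0$, gives $\mu(r,t]\le F(t)-F(r)$ for $F(t):=\mu(0,t]$ is exactly what reduces the set-function version to the classical one. From there, three ingredients complete the proof: Theorem 10.3 applied on the blocks $[k\delta,(k+2)\delta]$, your summation bound, and Theorem 13.3. In the first of these, take $\delta$ below the fixed threshold in (\emph{iii}), so that every triple inside a block is admissible under the strict inequality $t-r<2\delta$.

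Two corrections.

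First, your justification at the left endpoint is wrong. A bound on $|x(s)-x(r)|\wedge|x(t)-x(s)|$ cannot detect a single jump. For example, the deterministic paths $1_{[1/n,T]}$ satisfy (\emph{iii}) with $\mu\equiv 0$, yet $|X_n(\delta)-X_n(0)|=1$ for $n>1/\delta$. So neither continuity of $F$ at $0$ nor the maximal inequality controls the oscillation there. In Billingsley's Theorem 13.3 that term is handled by finite-dimensional convergence at $0$ and $\delta$, together with right-continuity of the limit $X$ at $0$; this is where the hypothesis that $X$ has paths in $\mathbb{D}$ is used. Since (\emph{i}), (\emph{ii}) and your $w''$ estimate are precisely the hypotheses of Theorem 13.3, invoke it directly rather than redoing the endpoint analysis.

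Second, the obstacle you anticipate in Step 2 does not arise. Theorem 10.3 allows arbitrary non-negative weights $u_i$, so intervals where $F$ is constant are harmless and no time change is needed. The passage from finite grids to a whole block uses only two facts, as in Theorem 10.4: right-continuity of the paths, and continuity of $\mathbb{P}$ along an increasing sequence of grids.
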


\begin{theorem}
    Under the assumptions given in Theorem \ref{THM1}, the scaled shot-noise  defined in \eqref{sn} weakly converges to the H-fBm, i.e. 
    \[
    \hat{S}_{\alpha,c} \Rightarrow B_\alpha^H, \qquad \text{in } (\mathbb{D}[0,T],J_1).
    \]
\end{theorem}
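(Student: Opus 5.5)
The plan is to apply the lemma from \cite{PAN2} with $X_c=\hat{S}_{\alpha,c}$ and $X=B^H_\alpha$. The set function is built from the increment variance of the limit:
\[
\mu(s,t]:=\rho(s,t), \qquad 0\le s<t,
\]
where $\rho$ is given in \eqref{rho}; the exponents will be $\eta=1$ and $\beta=1$. First I check that $\mu$ meets the structural hypotheses. Monotonicity follows from Lemma \ref{lem1}(iii), and super-additivity on adjacent intervals from Lemma \ref{lem1}(ii). Strictly, the lemma asks for a set function on Borel sets; I would extend $\mu$ to finite unions of intervals, or simply note that only intervals $(r,t]$ are used in the \cite{PAN2} argument. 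Condition (i) is Theorem \ref{THM1}. Condition (iv) is Lemma \ref{lem1}(v), since $\mu(0,t]=\rho(0,t)=t$. For condition (ii), $B^H_\alpha(T)-B^H_\alpha(T-\delta)$ is centered Gaussian with variance $\rho(T-\delta,T)$, so Chebyshev together with Lemma \ref{lem1}(iv) (continuity of $\rho$ near the diagonal, read from the second line of \eqref{rho} and \eqref{asy0}) gives the required limit.

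The core is the moment bound (iii). By Markov and Cauchy--Schwarz,
\[
\mathbb{P}\big(|X_c(r)-X_c(s)|\wedge|X_c(s)-X_c(t)|\ge\epsilon\big)\le \epsilon^{-4}\,\big(\mathbb{E}|X_c(s)-X_c(r)|^4\big)^{1/2}\big(\mathbb{E}|X_c(t)-X_c(s)|^4\big)^{1/2}.
\]
Write $X_c(t)-X_c(s)=\int\int h_{s,t}(u) r\,N(du,dr)/\sqrt{cK_{\alpha,\lambda}}$ with $h_{s,t}(u)=\log_+^{\beta}(ct/u)-\log_+^{\beta}(cs/u)$ and $\beta=(\alpha-1)/2$. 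Since $K_1=0$, the cumulant formula for Poisson integrals gives
\[
\mathbb{E}\Big[\Big(\int\int hr\,dN\Big)^4\Big]=\lambda\int h^4K_4\,du+3\Big(\lambda\int h^2K_2\,du\Big)^2 .
\]
After substituting $u\mapsto cu$, the second term equals $3\big(\tfrac{1}{K\Gamma(\alpha)}\int h_{s/c,t/c}^2(cu)K_2(cu)\,du\big)^2$. Because $K_2$ is uniformly bounded by some $M$, this is at most $C\rho(s,t)^2$, using the first line of \eqref{rho} with $\Gamma(\alpha)$.

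For the first term, $K_4\le\kappa K_2\le\kappa M$, which gives the bound $\frac{\kappa M}{cK^2\lambda\Gamma(\alpha)^2}\int h^4(u)\,du$ in scaled variables. I then need $\int h^4\lesssim(\int h^2)^2$ uniformly, or else a bound by $c^{-1}\rho$ that is absorbed. This is the main obstacle: $\int h^4$ is only of order $\rho$, not $\rho^2$. Here I would try the standard shot-noise trick. Since each $\hat S_{\alpha,c}$ is a pure-jump-free-ish process with continuous kernel in $t$, the $c^{-1}\int h^4$ term is small when $t-r\gtrsim 1/c$. When $t-r<1/c$, I would instead use the fact that the event requires the two adjacent increments to be both large; the probability that a Poisson point falls in $(cr,ct]$ is then $O(c(t-r))$, which is bounded in terms of $\rho$ after rescaling (near the diagonal, $\rho(s,t)\approx (t-s)$ up to the $\log$ factor).

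Concretely, I would bound $\int h^4\le \sup|h|^2\int h^2$. Since $\beta<1/2$, the kernel $\log_+^\beta$ is Hölder with exponent $\beta$ after the cutoff, so $\sup|h|^2$ is small only away from the jump region. I then split into the $u\le cs$ part, where $h$ is bounded by $C\log^\beta(t/s)$, and the $u\in(cs,ct]$ part, where I use $\int_{cs}^{ct}\log^{4\beta}(ct/u)\,du\le c(t-s)\cdot(\cdots)$. The aim is a final bound $\mathbb{E}|X_c(t)-X_c(s)|^4\le C\,\rho(s,t)^{2\eta'}$ with some $\eta'>1/2$, possibly taking $\mu=\rho^{\theta}$ or $\mu(s,t]=\rho(s,t)+(t-s)$, which is still super-additive and monotone with continuous $\mu(0,t]$. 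With that bound, (iii) holds with $2\eta>1$ and $\beta=1$, and the lemma of \cite{PAN2} yields $\hat S_{\alpha,c}\Rightarrow B^H_\alpha$ in $(\mathbb{D}[0,T],J_1)$.
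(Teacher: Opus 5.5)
Your skeleton is the paper's: the tightness lemma with $\mu=\rho$ and $\eta=\beta=1$, Theorem~\ref{THM1} for (i), stochastic continuity of $B^H_\alpha$ for (ii), $\rho(0,t)=t$ for (iv), and Markov, Cauchy--Schwarz and the Poisson fourth-moment identity for (iii). Your identity $\lambda\int h^4K_4\,du+3(\lambda\int h^2K_2\,du)^2$ is correct.

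You also identify the real difficulty. Write $f_{s,t}(u)=\log_+^{(\alpha-1)/2}(t/u)-\log_+^{(\alpha-1)/2}(s/u)$ and $\ell=\log(t/s)$. For $0<t-s\le s$ one has $\int_0^t f_{s,t}^4\,du\asymp t\ell^{2\alpha-1}$ and $\big(\int_0^t f_{s,t}^2\,du\big)^2\asymp t^2\ell^{2\alpha}$. Hence $c^{-1}\int_0^t f_{s,t}^4\,du$ divided by $\rho(s,t)^2$ is of order $1/(c(t-s))$. The paper disposes of this term by calling it negligible ``for $c$ sufficiently large'', which is not uniform in $s,t$.

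\textbf{The gap.} Your proposal does not close this gap either, and the repairs you sketch do not work.
\begin{enumerate}[label=(\alph*)]
\item Near the diagonal $\rho(s,t)\asymp(t-s)^{\alpha}s^{1-\alpha}$, not $\approx t-s$.
\item The kernel $\log_+^{(\alpha-1)/2}(ct/T_j)$ vanishes continuously at $ct=T_j$. So $\hat S_{\alpha,c}$ has continuous paths, and its increments on $(r,t]$ come from all earlier shots. Whether a point falls in $(cr,ct]$ therefore does not decide the event.
\item A probability of order $c(t-r)$ cannot be bounded by $C\mu(r,t]^{2\eta}$ with $2\eta>1$ uniformly in $c$ when $\mu$ is comparable to length.
\item $\rho^{\theta}$ with $\theta<1$ need not be super-additive.
\item $\mu=\rho+(t-s)$ fails at the origin. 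With $K_2\equiv K$, the $K_4$-term alone gives $\mathbb{E}|\hat S_{\alpha,c}(s)|^4\ge\Gamma(2\alpha-1)\,s/(c\lambda\Gamma(\alpha)^2)$, and the increment over $(s,2s]$ has fourth moment of the same order. So for $r=0$, $t=2s\to0$ at fixed $c$, your Cauchy--Schwarz bound is of order $s/c$, not $O(s^{2\eta})$.
\end{enumerate}
Condition (iii) is therefore left unproved.

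\textbf{One way to close it.} First bound the fourth moment uniformly in $c$:
\begin{enumerate}[label=(\arabic*)]
\item Split at $u=s$, substitute $u=se^{-w}$ and $u=te^{-w}$ respectively, and use $\int_0^\infty[(1+v)^{\gamma}-v^{\gamma}]^p\,dv<\infty$ for $p=2,4$ and $\gamma=(\alpha-1)/2<1/2$.
\item This gives $\int_0^t f_{s,t}^2\,du\le Ct\min(\ell,1)^{\alpha}$ and $\int_0^t f_{s,t}^4\,du\le Ct\min(\ell,1)^{2\alpha-1}$.
\item Hence $\mathbb{E}|\hat S_{\alpha,c}(t)-\hat S_{\alpha,c}(s)|^4\le C\,t\min(\ell,1)^{2\alpha-1}$, uniformly in $c\ge1$ and $0\le s<t\le T$.
\end{enumerate}
Then change the set function:
\begin{enumerate}[label=(\arabic*)]
\setcounter{enumi}{3}
\item Take the measure $\mu(s,t]:=t^{\theta}-s^{\theta}$ with $\theta=1/(2\alpha-1)$, so that $\mu(s,t]=t^\theta(1-e^{-\theta\ell})$.
\item Treat $\ell\ge1$ and $\ell<1$ separately, using $1-e^{-\theta\ell}\ge\theta\ell e^{-\theta}$ in the latter case. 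This gives $t\min(\ell,1)^{2\alpha-1}\le C\mu(s,t]^{2\alpha-1}$.
\item Being a measure, $\mu$ is monotone and super-additive, and $\mu(0,t]=t^{\theta}$ is continuous.
\item Cauchy--Schwarz then yields (iii) with $\eta=\alpha-\tfrac12>\tfrac12$ and exponent $4$ on $\epsilon$.
\end{enumerate}
The essential point is that $\mu$ must be heavier than Lebesgue measure near $0$. This excludes $\mu=\rho$, since $\rho(0,t)=t$.
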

\begin{proof}
Let us take $\mu(\cdot,\cdot)$ equal to the increment variance of $B_\alpha^H$, i.e. $\mu(r,t]:=\rho (r,t) $ given in \eqref{rho}. Then $\mu(\cdot,\cdot)$ is non-negative, monotone and super-additive by (\emph{i}), (\emph{iii}) and (\emph{ii}) of Lemma \ref{lem1}, respectively.

The process $ \hat{S}_{\alpha,c}$ has sample paths in $\mathbb{D}[0,T]$, since the sum $\sum_{j=1}^{N(t)} \left(\log(ct)-\log(T_j)\right)^\frac{\alpha-1}{2}R_j(T_j)$ has at most finitely many jumps in any compact time interval.  Moreover, it is already proved in \cite{BEG} that the same holds for the limiting process $B_\alpha^H$.

We now prove that the assumptions (\emph{i})-(\emph{iv}) hold for $ \hat{S}_{\alpha,c}$ and $B_\alpha^H$:
   \begin{enumerate}[label=(\emph{\roman*})]
\item the convergence of the finite dimensional distributions of $ \hat{S}_{\alpha,c}$ to those of $B_\alpha^H$ was proved in Theorem \ref{THM1};
\item this condition is verified due to stochastic continuity of $B_\alpha^H$ proved in \cite{BEG}, Theorem 3.2;
\item we now check that condition \eqref{eq} is satisfied for $\eta=\beta=1$ and $\mu(r,t]=\rho (r,t) $: we can write, by Markov and H\"{o}lder inequalities,
 \begin{eqnarray}\label{eq2}
        \mathbb{P}(|\hat{S}_{a,c}(r)-\hat{S}_{a,c}(s)|  \wedge |\hat{S}_{a,c}(s)-\hat{S}_{a,c}(t)| \geq     
        \epsilon)
         &\leq& \frac{1}{\epsilon^4} \mathbb{E}(|\hat{S}_{a,c}(r)-\hat{S}_{a,c}(s)| \wedge |\hat{S}_{a,c}(s)-\hat{S}_{a,c}(t)|)^4  \\
         &\leq&  \frac{1}{\epsilon^4} \mathbb{E}(|\hat{S}_{a,c}(r)-\hat{S}_{a,c}(s)|^2|\hat{S}_{a,c}(s)-\hat{S}_{a,c}(t)|^2) \notag \\
        &\leq& \frac{1}{\epsilon^4} (\mathbb{E}|\hat{S}_{a,c}(r)-\hat{S}_{a,c}(s)|^4)^{1/2}(\mathbb{E}|\hat{S}_{a,c}(s)-\hat{S}_{a,c}(t)|^4)^{1/2}. \notag 
        \end{eqnarray}
The fourth-order moment of the increment can be evaluated as follows, by denoting, for brevity, $b(c):=\sqrt{cK_{\alpha,\lambda} }$, for $0 \leq s \leq t$,
\begin{eqnarray}
    \mathbb{E}\left[ \hat{S}_{a,c} (t)-\hat{S}_{\alpha,c} (s)\right]^4 
&=&\frac{1}{b(c)^4}\mathbb{E}\left[ \int_0^{+\infty} \int_{\mathbb{R}} \left(\log_+^{(\alpha -1)/2}(ct/u)-\log_+^{(\alpha -1)/2}(cs/u)\right) r N(du,dr)\right]^4 \notag \\
&=&\frac{1}{b(c)^4}\mathbb{E}\left[ \int_0^{cs} \int_{\mathbb{R}} \left(\log^{(\alpha -1)/2}(ct/u)-\log^{(\alpha -1)/2}(cs/u) \right) r N(du,dr)\right. + \notag \\
&+& \left. \int_{cs}^{ct} \int_{\mathbb{R}} \log^{(\alpha -1)/2}(ct/u)r N(du,dr)\right]^4  \notag 
\end{eqnarray}
Then, by applying Lemma 3.3 in \cite{PAN}, with $(\log ct-\log u)^{(\alpha -1)/2}_+$ in place of $g(ct-u)$, we can write that, for $0 \leq s \leq t$,
\begin{eqnarray}
    &&\mathbb{E}\left[ \hat{S}_{a,c} (t)-\hat{S}_{\alpha,c} (s)\right]^4 \label{E4} \\
&=&\frac{3}{b(c)^2}\mathbb{E}\left[ \int_0^{t}  \left(\log_+^{(\alpha -1)/2}(t/u)-\log_+^{(\alpha -1)/2}(s/u)\right)^2 K_2(cu) \overline{N}_c(du)\right]^2 +\notag \\
&+&\frac{1}{b(c)^4}\mathbb{E}\left[ \int_0^{t}  \left(\log_+^{(\alpha -1)/2}(t/u)-\log_+^{(\alpha -1)/2}(s/u)\right)^4 \left(K_4(cu) -3K_2(cu)^2\right) \overline{N}_c(du)\right],  \notag 
\end{eqnarray}
where we have considered that $\log_+^{(\alpha -1)/2}(s/u)=0$, for $u>s$ and that $K_4(u):=\int_{\mathbb{R}}x^4 dF_u(x)$, which is finite, by assumption. 

We now obtain a uniform bound for the fourth moment in \eqref{E4}: let
\[
f_{s,t}(u) :=
\log_+^{(\alpha-1)/2}(t/u)
-
\log_+^{(\alpha-1)/2}(s/u),
\]
so that, by using standard moment formulas for Poisson integrals (see \cite{KIN}, p.27), we have that
\begin{align*}
\mathbb{E}\Bigg[\int_0^t f_{s,t}(u)^2 K_2(cu)\,N_c(du)\Bigg]^2
&\le C' \left[
c \int_0^t f_{s,t}(u)^4\,du
+
c^2 \left(\int_0^t f_{s,t}(u)^2\,du\right)^2
\right],
\end{align*}
where we used the assumption that $K_2(\cdot)$ is uniformly bounded. Recalling that $b(c)^2 = cK_{\alpha,\lambda}$, it follows that
\[
\frac{1}{b(c)^2}
\mathbb{E}\Bigg[\int_0^t f_{s,t}(u)^2 K_2(cu)\,N_c(du)\Bigg]^2
\le
C \left[
\frac{1}{c}\int_0^t f_{s,t}(u)^4\,du
+
\left(\int_0^t f_{s,t}(u)^2\,du\right)^2
\right].
\]

Similarly, using the assumption $K_4(u)/K_2(u)\le \kappa$, we obtain
\[
\frac{1}{b(c)^4}
\mathbb{E}\Bigg[
\int_0^t f_{s,t}(u)^4 \big(K_4(cu)-3K_2(cu)^2\big)\,N_c(du)
\Bigg]
\le
 \frac{C''}{c}\int_0^t f_{s,t}(u)^4\,du.
\]

Therefore, for $c$ sufficiently large,
\[
\mathbb{E}\big|\hat S_{\alpha,c}(t)-\hat S_{\alpha,c}(s)\big|^4
\le
C''' 
\left(\int_0^t f_{s,t}(u)^2\,du\right)^2
.
\]

Finally, since $\int_0^t f_{s,t}(u)^2\,du = \Gamma(\alpha)\rho(s,t)$ by \eqref{rho},
and the second term is negligible, we obtain
\[
\mathbb{E}\big|\hat S_{\alpha,c}(t)-\hat S_{\alpha,c}(s)\big|^4
\le \bar C\,\rho(s,t)^2,
\]
with $\bar C>0$ independent of $c$.

 Therefore, we can write that, for $\bar C>0$, 
\[
\mathbb{P}(|\hat{S}_{a,c}(r)-\hat{S}_{a,c}(s)|  \wedge |\hat{S}_{a,c}(s)-\hat{S}_{a,c}(t)| \geq \epsilon) \leq \frac{\bar C}{\epsilon^4} \rho(r,s) \rho(s,t)\leq \frac{\bar C}{\epsilon^4} \rho(r,t)^2,
\]
where the last step follows by the property \textit{(iii)} of $\rho(\cdot, \cdot)$, proved in Lemma \ref{lem1}.
\item $\rho(0,t)$ is continuous in $t$ because of point \textit{(v)} in Lemma \ref{lem1}.
   \end{enumerate}
\end{proof}

\section*{Acknowledgements}

Luisa Beghin and Enrico Scalas acknowledge financial support under NRRP, Mission 4, Component 2, Investment 1.1, Call for tender No. 104 published on 2.2.2022 by the Italian MUR, funded by the European Union – NextGenerationEU– Project Title {\em Non–Markovian Dynamics and Non-local Equations} – 202277N5H9 - CUP: D53D23005670006.

E. Scalas also acknowledges financial support under the project: \\
000317\_24\_RICERCA\_UNIV\_2023\_PROG\_MEDI\_SCALAS - RICERCA ATENEO 2023 - SCALAS
PROGETTI MEDI. The title of the project is {\em Approximation of stochastic processes by
means of sums of random telegraph processes}.

\end{document}